\documentclass[10pt,twoside]{siamart1116}

\usepackage[english]{babel}
\usepackage{graphicx,epstopdf,epsfig}
\usepackage{amsfonts,epsfig,fancyhdr,graphics, hyperref,amsmath,amssymb}
\usepackage{verbatim}

\usepackage{lscape}

\setlength{\textheight}{210mm}
\setlength{\textwidth}{165mm}
\topmargin = -10mm

\setlength{\parskip}{.1in}


\newcommand{\tluste}[1]{\mbox{\mathversion{bold}$ #1 $}}
\newcommand{\A}[0]{{\tluste{A}}}
\newcommand{\B}[0]{{\tluste{b}}}
\newcommand{\X}[0]{{\tluste{x}}}

\newcommand{\R}[0]{{\mathbb{R}}}

\newcommand{\IR}[0]{{\mathbb{IR}}}

\newcommand{\ol}[1]{\mbox{$\overline{{#1}}$}} 
\newcommand{\ul}[1]{\mbox{$\underline{{#1}}$}}

\newcommand{\magni}{\mathop{\rm mag}\nolimits}

\newcommand{\rad}{\mathop{\rm rad}\nolimits}

\newcommand{\midp}{\mathop{\rm mid}\nolimits}
\newcommand{\www}{\mathop{\rm w}\nolimits}

\def\Mid#1{{#1}_c}		
\def\Rad#1{{#1}_\Delta}		
\def\tluste#1{\protect{\textrm{\boldmath $#1$}}}
\newcommand{\onum}[1]{\mbox{$\overline{{#1}}$}} 
\newcommand{\unum}[1]{\mbox{$\underline{{#1}}$}} 
\newcommand{\inum}[1]{\mbox{$\tluste{#1}$}} 


\newcommand{\jh}[0]{\color{red}}




\newcommand{\HE}{Handling Editor}
\newcommand{\DoS}{Date of Submission}
\newcommand{\DoA}{Date of Acceptance}

\newcommand{\Names}{Jaroslav Hor\'{a}\v{c}ek, Josef Mat\v{e}jka, Milan Hlad\'{i}k}
\newcommand{\Title}{Determinants of interval matrices}


\newtheorem{example}[theorem]{Example}



\renewtheorem{theorem}{Theorem}[section]
\newtheorem{conjecture}{Conjecture}

\begin{document}

\bibliographystyle{plain}

\setcounter{page}{1}

\thispagestyle{empty}

 \title{\Title\thanks{Received
 by the editors on \DoS.
 Accepted for publication on \DoA. 
 Handling Editor: \HE.}}

\author{
Jaroslav Hor\'{a}\v{c}ek\thanks{Department of Applied Mathematics,
Charles University, Prague, Czech Republic
(horacek@kam.mff.cuni.cz). Supported by a GA\v{C}R grant P403-18-04735S.}
\and 
Milan Hlad\'i{k} \thanks{Department of Applied Mathematics,
	Charles University, Prague, Czech Republic (hladik@kam.mff.cuni.cz). Supported by a GA\v{C}R grant P403-18-04735S. }
\and
Josef Mat\v{e}jka\thanks{Department of Applied Mathematics,
Charles University, Prague, Czech Republic (pipa9b6@gmail.com).}
 }

\markboth{\Names}{\Title}

\maketitle

\begin{abstract}
In this paper we shed more light on determinants of interval matrices. Computing the exact bounds on a determinant of an interval matrix is an NP-hard problem. Therefore, attention is first paid to 
approximations. NP-hardness of both relative and absolute approximation is proved. 
Next, methods computing verified enclosures of interval determinants and their possible combination with preconditioning are discussed. A new method based on Cramer's rule was designed. It returns similar results to the state-of-the-art method, however, it is less consuming regarding computational time. As a byproduct, the Gerschgorin circles were generalized for interval matrices. New  results about classes of interval matrices with polynomially computable tasks related to determinant are proved (symmetric positive definite matrices, class of matrices with identity midpoint matrix, tridiagonal H-matrices). The mentioned methods were exhaustively compared for random general and symmetric matrices. 

\end{abstract}

\begin{keywords}
Interval matrices, Interval determinant, Enclosures of a determinant, Computational complexity.
\end{keywords}
\begin{AMS}
15A15, 68Q17 , 65G40. 
\end{AMS}



\section{Introduction} \label{intro-sec}

Interval determinants can be found in various applications. They were used e.g., in \cite{MerDon2006} for testing regularity of inverse Jacobian matrix, in \cite{OetDan2009} for workspace analysis of planar
flexure-jointed mechanism, in \cite{RatRok2003} for computer graphics applications or in \cite{smith1969interval} as a testing tool for Chebyshev systems.

In this work we first address computational properties of determinants of general interval matrices. We are going to prove two new results regarding absolute and relative approximation of interval determinants. Next, we slightly mention known tools that can be used for computing interval determinants -- interval Gaussian elimination, Hadamard inequality and Gerschgorin circles. We introduce our new method based both on Cramer's rule and solving interval linear systems. Regarding symmetric matrices, there are many results about enclosing their eigenvalues and they can be also used for computing interval determinants. All the methods work much better when combined with some kind of preconditioning. We briefly address that topic. 
We also prove that some classes of interval matrices have some tasks related to interval determinant computable in polynomial time (symmetric positive definite matrices, some matrices with identity midpoint matrix, tridiagonal H-matrices).  
At the end we provide thorough numerical testing of the mentioned methods on random general and symmetric interval matrices.

\section{Basic notation and definitions} 
In our work it will be sufficient to deal only with square interval matrices.    
An interval matrix is defined by
$ \A = \{ A \in \R^{n \times n}\ | \ \ul{A} \leq A \leq \ol{A} \}$ for $\ul{A}, \ol{A} \in \R^{n \times n}$ such that $\ul{A} \leq \ol{A}$ (understood component-wise).
To compute with intervals we use the standard interval arithmetic,  for more details on the interval arithmetic see for example \cite{moore2009introduction} or \cite{neumaier1990interval}.

We denote intervals and interval structures in boldface ($\tluste{a}, \A, \B$). Real point matrices and vectors will be denoted in normal case ($A, b$). An interval coefficient of $\A$ lying at the position $(i, j)$ is denoted by $\A_{ij}$.

An interval can be also defined by its midpoint $a_c \in \R$ and radius $a_\Delta \in \R$ as $\tluste{a} = [a_c - a_\Delta, a_c + a_\Delta]$. Interval vectors and matrices are defined similarly. Notation $\midp(\tluste{a}), \rad(\tluste{a})$ can be sometimes used instead of $a_c, a_\Delta $ respectively. 
 The set of all real closed intervals is denoted by  $\IR$ and the set of all square interval matrices of order $n$ is denoted by $\IR^{n \times n}$. When we need (in a proof) open intervals we write them with brackets, i.e. $(\ul{a}, \ol{a})$.

%
%

The magnitude is defined by $\magni(\tluste{a}) = \max ( |\ul{a}|, |\ol{a}|)$ which is sometimes confused with the absolute value $|\tluste{a}| = \{ |a|, a \in \tluste{a} \} $. The width of an interval $\tluste{a}$ is defined by $\www(\tluste{a}) = \ol{a} - \ul{a}$. All these notions can be intuitively defined for vectors, we just use them component-wise. We will also use the interval vector Euclidean norm $\| \X \| = \max \{ \|x\|,   x \in \X \} = \sqrt{\sum \magni (\tluste{x}_i)^2}$. 
 The relation $ \tluste{a} \leq \tluste{b}$ holds when $ \ol{a} \leq \ul{b}$ (similarly for $<$). When we compare two interval structures, the relation is applied component-wise.  
In the following text, $E$ will denote a matrix consisting of ones of a corresponding size. The identity matrix of a corresponding size will be denoted $I$ with $e_i$ denoting its $i$-th column. By $A^+$ we denote the  Moore-Penrose pseudoinverse matrix to $A$ and by $A^{-T}$ we denote the inverse matrix to $A^T$. Spectral radius of $A$ is denoted $\varrho(A)$.
Now, we define the main notion of this work. 

\begin{definition}[Interval determinant]
		Let $\A$ be a square interval matrix, then its interval determinant is defined by
		$$ \det(\A) = \{\det(A), A \in \A \}. $$
\end{definition}

Computing the exact bounds, i.e.,  \emph{hull}, of  $\det(\A)$ is a hard problem. That is why, we are usually satisfied with an enclosure of the interval determinant. Of course, the tighter the better. 

\begin{definition}[Enclosure of interval determinant]
	Let $\A$ be a square interval matrix, then an interval enclosure of its determinant is defined as any $\tluste{d} \in \IR$ such that 
	$$ \det(\A) \subseteq \tluste{d}. $$
\end{definition}

\section{What was known before}
As it was said in the introduction, to the best knowledge of ours, there are only a few theoretical results regarding interval determinants.
Some of them can be found in e.g.,  \cite{KreLak1998,Roh1985,rohn1996checking}.
From linearity of a determinant with respect to matrix coefficients we immediately get the fact that the exact bounds on an interval determinant can be computed as minimum and maximum determinant of all $2^{n^2}$
possible "edge" matrices of $\A$.

\begin{proposition}
	For a given square interval matrix $\A$ the interval determinant can be obtained as 
	$$ \det(\A) = [\min(S), \max(S)], \ \textrm{where} \ S = \{ \det(A), \ \forall i,j \ A_{ij} = \ul{A}_{ij} \ \textrm{or} \ A_{ij} = \ol{A}_{ij}\}.$$
\end{proposition}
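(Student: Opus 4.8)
The plan is to exploit two features of the situation: the multilinearity of the determinant and the fact that $\A$, viewed as an axis-parallel box in $\R^{n\times n}\cong\R^{n^2}$, is compact and convex. First I would identify $\A$ with the box $\prod_{i,j}[\ul{A}_{ij},\ol{A}_{ij}]$, whose vertices are precisely the edge matrices occurring in the definition of $S$. Since $\det\colon\R^{n\times n}\to\R$ is continuous (indeed polynomial) and $\A$ is compact and connected, the image $\det(\A)$ is a compact interval, namely $[\min_{A\in\A}\det(A),\,\max_{A\in\A}\det(A)]$. It therefore suffices to show that these two extrema are attained at vertices of the box, i.e. that $\min_{A\in\A}\det(A)=\min(S)$ and $\max_{A\in\A}\det(A)=\max(S)$.

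The key step is the extremal argument based on multilinearity. By Laplace (cofactor) expansion along a row $i$ we have $\det(A)=\sum_j A_{ij}C_{ij}$, where each cofactor $C_{ij}$ is independent of the entries of row $i$; hence, with all remaining entries fixed, $\det(A)$ is an affine function of the single entry $A_{ij}$. An affine function on an interval attains its extrema at the endpoints, so starting from any maximizer $A^*\in\A$ I can push one coordinate at a time to an endpoint $\ul{A}_{ij}$ or $\ol{A}_{ij}$ without ever decreasing the determinant, each step remaining inside the box. After sweeping through all $n^2$ entries this produces an edge matrix whose determinant is at least $\det(A^*)$, forcing $\max_{A\in\A}\det(A)=\max(S)$. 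The same sweep applied to a minimizer yields $\min_{A\in\A}\det(A)=\min(S)$.

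Finally I would assemble the pieces. The edge matrices lie in $\A$, so $S\subseteq\det(\A)$ and in particular both $\min(S)$ and $\max(S)$ belong to $\det(\A)$; conversely the extremal argument shows every $\det(A)$ with $A\in\A$ lies in $[\min(S),\max(S)]$, while connectedness of $\A$ (via the intermediate value theorem along a segment in the convex set $\A$) guarantees that every value between $\min(S)$ and $\max(S)$ is actually achieved. Together these inclusions give $\det(\A)=[\min(S),\max(S)]$.

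I expect the only genuinely delicate point to be the vertex-extremum claim; everything else is routine. Care is needed in stating the coordinate-by-coordinate sweep so that each intermediate matrix stays in the box, and in justifying affineness in a single entry via cofactor expansion, since the determinant is emphatically \emph{not} jointly affine in all its entries. The connectedness argument and the two inclusions are then immediate.
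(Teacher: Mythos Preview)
Your argument is correct and follows the same idea as the paper: both use that the determinant is linear (affine) in each individual entry, so the extrema over the box $\A$ are attained at edge matrices. The paper states this only as an immediate consequence of multilinearity without giving details, whereas you have spelled out the compactness/connectedness and the coordinate-by-coordinate sweep explicitly.
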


%
	
\noindent Regarding complexity of determinant computation we have the following 	theorem \cite{KreLak1998,rohn1996checking}.
	\begin{theorem}
	Computing the either of the exact bounds $\ul{\det(\A)}$ and $\ol{\det(\A)}$ of the matrix $$\A = \left[ A - E, A + E \right], $$ 
	where $A$ is rational nonnegative is NP-hard.

	\end{theorem}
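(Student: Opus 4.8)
The plan is to reduce from the problem of deciding whether an interval matrix of exactly the same special form, $[A - E, A + E]$ with $A$ rational (and, in the relevant instances, nonnegative), contains a singular matrix -- equivalently, whether it is \emph{not regular}. The NP-hardness of that decision problem is precisely the combinatorial core established in \cite{KreLak1998,rohn1996checking}. I would show that any algorithm computing one of the bounds $\ul{\det(\A)}$ or $\ol{\det(\A)}$ in polynomial time could be turned into a polynomial decision procedure for singularity, which forces P$\,=\,$NP.

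The key structural observation is that $\det(\A)$ is a genuine connected interval: $\A$ is a box, hence convex and connected, and $A' \mapsto \det(A')$ is continuous, so its image is an interval $[\ul{\det(\A)}, \ol{\det(\A)}]$. Consequently $\A$ contains a singular matrix, i.e.\ $0 \in \det(\A)$, if and only if $\ul{\det(\A)} \le 0 \le \ol{\det(\A)}$. In the hardness instances the midpoint $A$ may be assumed nonsingular (in fact positive definite), so $\det(A) \neq 0$, and I may further assume $\det(A) > 0$: if not, replace $A$ by $PA$ for a single transposition matrix $P$, which keeps the midpoint nonnegative (only two rows are swapped) and, since $PE = E$, keeps the matrix of the required form $[PA - E, PA + E]$. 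Because the interval $\det(\A)$ then contains the positive number $\det(A)$, regularity of $\A$ is equivalent to $\ul{\det(\A)} > 0$; hence a routine returning the lower bound decides singularity in polynomial time, and computing $\ul{\det(\A)}$ is NP-hard.

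To transfer the hardness to the upper bound, I would use the same transposition to flip the sign of the midpoint determinant. With $P$ the permutation matrix of a single transposition, $\det(P) = -1$ and $PE = E$, so $P\A = [PA - E,\, PA + E]$ is again of the required form, $PA \geq 0$ whenever $A \geq 0$, and $A'$ is singular iff $PA'$ is; thus $P\A$ is regular iff $\A$ is. Now $\det(PA) = -\det(A) < 0$, so $\det(P\A)$ is an interval containing a negative number and regularity of $P\A$ is equivalent to $\ol{\det(P\A)} < 0$. Applying a hypothetical upper-bound routine to the nonnegative-midpoint matrix $P\A$ therefore decides singularity of $\A$ as well, proving that computing $\ol{\det(\A)}$ is NP-hard.

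The essential difficulty is not in these reductions but in the underlying hardness of detecting a singular matrix in the class $[A-E,A+E]$, where a reduction from an NP-complete problem (a MAX-CUT/partition-type quadratic optimization such as $\max_{x \in \{-1,1\}^n} x^{T} M x$) must be carried out; I would invoke that result from \cite{KreLak1998,rohn1996checking} as a black box. The only points requiring care on our side are the simultaneous control of the sign of $\det(A)$ and the nonnegativity of the midpoint -- handled above by the transposition $P$, which preserves both the matrix form and nonnegativity while reversing the determinant's sign -- together with the connectedness of $\det(\A)$, which guarantees that a single computed bound already determines whether $0$ lies in the range.
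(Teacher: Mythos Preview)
The paper does not prove this theorem; it merely quotes it from \cite{KreLak1998,rohn1996checking} as background. So there is no ``paper's own proof'' to compare against, and your task here was really to reconstruct the argument behind the cited result.

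Your reduction is correct and is essentially the standard one. The only remarks worth making are minor. First, in the hard instances from \cite{rohn1996checking} the midpoint $A$ is already symmetric positive definite (the paper itself uses this fact in the proof of the relative-approximation theorem), so $\det(A)>0$ holds automatically and your preparatory row swap for the lower-bound case is never actually needed. Second, your transposition trick for the upper bound is clean: since $PE=E$ and $PA\geq 0$, the matrix $P\A=[PA-E,PA+E]$ stays in the required class, and $\ol{\det(P\A)}=-\ul{\det(\A)}$, so an upper-bound oracle on $P\A$ decides regularity of $\A$. Finally, note that what you are really invoking is that \emph{singularity} of $[A-E,A+E]$ is NP-hard to decide (regularity is the coNP-complete complement); since computing either bound lets you test $0\in\det(\A)$, the NP-hardness conclusion follows either way under Turing reductions.
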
 
%

\section{Approximations}
In the end of the previous section we saw that the problem of computing the exact bounds of an interval determinant is generally 
an NP-hard problem. One can at least hope for having some approximation algorithms. Unfortunately, we prove that this is not the case, neither for relative nor for absolute approximation.

\begin{theorem}[Relative approximation]
	Let $\A$ be an interval matrix with $A_c$ nonnegative positive definite matrix and $A_\Delta = E$.  Let $\varepsilon$ be arbitrary such that $0 < \varepsilon < 1$.  If there exists a polynomial time algorithm returning $\left[ \ul{a}, \ol{a} \right] $ such that 
	$$\det(\A) \subseteq \left[ \ul{a}, \ol{a} \right] \subseteq \left[ 1 - \varepsilon, 1 + \varepsilon\right] \cdot \det(\A), $$
	then P = NP.
\end{theorem}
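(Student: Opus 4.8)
\emph{Proof plan.} Write $\det(\A)=[\ul{d},\ol{d}]$. The strategy is to show that any polynomial-time relative-approximation algorithm would decide regularity of $\A$ in polynomial time, and then to invoke the NP-hardness of that decision for the matrix class at hand.

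First I would exploit positive definiteness of $A_c$. Since $\det(A_c)>0$ and $\A$ is convex (hence connected) with $\det$ continuous on it, the intermediate value theorem gives that $\A$ is regular iff $\det$ never vanishes on $\A$, i.e.\ iff $\ul{d}>0$; if $\A$ is singular, then $\ul{d}\le 0$. In particular $\ol{d}\ge\det(A_c)>0$ always. Thus regularity of $\A$ is equivalent to $\ul{d}>0$, that is, to $\sgn(\ul{d})$.

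Next I would show that the relative approximation pins down $\sgn(\ul{d})$. Using $\ol{d}>0$ and evaluating the interval product $[1-\varepsilon,1+\varepsilon]\cdot[\ul{d},\ol{d}]$ in the three cases $\ul{d}>0$, $\ul{d}=0$, $\ul{d}<0$ yields respectively $[(1-\varepsilon)\ul{d},(1+\varepsilon)\ol{d}]$, $[0,(1+\varepsilon)\ol{d}]$ and $[(1+\varepsilon)\ul{d},(1+\varepsilon)\ol{d}]$. Substituting into $[\ul{d},\ol{d}]\subseteq[\ul{a},\ol{a}]\subseteq[1-\varepsilon,1+\varepsilon]\cdot[\ul{d},\ol{d}]$ and reading off the left endpoints forces $\ul{a}>0$, $\ul{a}=0$ and $\ul{a}<0$ in the three cases. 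Hence $\sgn(\ul{a})=\sgn(\ul{d})$, so the hypothetical polynomial algorithm decides regularity of $\A$ merely by reporting the sign of $\ul{a}$.

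Finally I would close the loop with an NP-hardness reduction: it remains to show that deciding regularity of interval matrices of the prescribed form -- nonnegative positive definite midpoint and radius $A_\Delta=E$ -- is NP-hard. This is the role of the hardness underlying the theorem of the previous section (in the spirit of Poljak--Rohn's proof that robust nonsingularity is NP-hard): the reduction encodes a hard combinatorial instance into $\sgn(\ul{\det(\A)})$, equivalently into whether a singular matrix lies in $[A_c-E,A_c+E]$. Combined with the previous step, a polynomial relative-approximation algorithm would decide this NP-hard problem, forcing P $=$ NP.

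\emph{Main obstacle.} The delicate point is the last step: arranging the hardness construction so that the produced midpoint is simultaneously entrywise nonnegative and positive definite while the radius is exactly $E$, all without altering the regularity answer. I expect this to require care -- for instance controlling $A_c$ by a suitable rational scaling together with a diagonally dominant (M-matrix-type) choice that guarantees both positive definiteness and nonnegativity -- rather than a naive diagonal shift $A_c\mapsto A_c+tI$, which would change which matrices of the interval are singular and hence would not preserve the instance's answer.
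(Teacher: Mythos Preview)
Your proposal is correct and follows essentially the same reduction as the paper: both show that a relative-approximation oracle decides regularity of $\A$ via the sign of $\ul{a}$, and then invoke the hardness of that regularity problem. Your case analysis on the interval product $[1-\varepsilon,1+\varepsilon]\cdot[\ul{d},\ol{d}]$ is in fact more thorough than the paper's---you note that $\ol{d}\ge\det(A_c)>0$ eliminates the negative-upper-bound branch outright and you treat $\ul{d}=0$ and $\ul{d}<0$ explicitly---whereas the paper just argues $\ul{d}>0\Leftrightarrow\ul{a}>0$ from the two inclusions and gestures at the symmetric case.

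Your ``main obstacle'' is not one you need to overcome yourself. The paper does not build the hardness instance from scratch; it simply cites Rohn's result that checking regularity of $[A-E,A+E]$ is coNP-complete already when $A$ is rational, nonnegative, symmetric and positive definite. So the delicate construction you anticipate (arranging nonnegativity and positive definiteness simultaneously with radius exactly $E$) is precisely what the cited reference supplies, and no further scaling or diagonal shifting on your part is required.
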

\begin{proof}
From \cite{rohn1996checking} we use the fact that  for a rational nonnegative symmetric positive definite matrix $A$,  checking whether the interval matrix $\A = \left[ A - E, A + E\right]$ is regular (every $A \in \A$ is regular) is a coNP-complete problem.

We show that if such algorithm existed, it would decide whether a given interval matrix is regular. 
For a regular interval matrix we must have $\ul{\det(\A)} > 0$ or $\ol{\det(\A)} < 0$.
If $\ul{\det(\A)} > 0$ then, from the second inclusion  $\ul{a} \geq (1-\varepsilon) \cdot \ul{\det(\A)} > 0$.
On the other hand, if $\ul{a} > 0$ then from the first inclusion $ \ul{\det(\A)} \geq \ul{a} > 0$.
Therefore, we have $\ul{\det(\A)} > 0$ if and only if $\ul{a} > 0$. The corresponding equivalence for $\ol{\det(\A)} < 0$ can be derived in a similar way.
\end{proof}

\begin{theorem}[Absolute approximation]
	Let $A_c$ be a rational positive definite $n \times n$ matrix. Let $\A = \left[A_c - E, A_c + E\right]$ and let $\varepsilon$ be arbitrary such that $0 < \varepsilon$.  If there exists a polynomial time algorithm returning $\left[ \ul{a}, \ol{a} \right] $ such that 
	$$\det(\A) \subseteq \left[ \ul{a}, \ol{a} \right] \subseteq \det(\A) +[-\varepsilon,\varepsilon], $$
	then P = NP.
\end{theorem}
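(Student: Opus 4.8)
\emph{Proof sketch.} The plan is to reduce a coNP-complete problem to one call of the hypothetical algorithm, after amplifying the determinant so that the fixed error $\varepsilon$ can no longer hide its sign. As in the previous theorem, I would start from Rohn's result \cite{rohn1996checking} that deciding regularity of $[A - E, A + E]$ is coNP-complete for rational nonnegative symmetric positive definite $A$. Let $q$ be a common denominator of the entries of $A$. Every vertex matrix $A + \Sigma$ with $\Sigma \in \{-1,1\}^{n \times n}$ is $q^{-1}$ times an integer matrix, so each of its nonzero determinants has absolute value at least $q^{-n}$. Consequently, if $[A - E, A + E]$ is regular then, its midpoint $A$ being positive definite, all its determinants are positive and $\ul{\det([A - E, A + E])} \ge q^{-n}$; if it is singular then $0 \in \det([A - E, A + E])$.

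The obstacle is that this separating gap $q^{-n}$ is exponentially small while $\varepsilon$ is a fixed constant, so the algorithm cannot resolve it directly. I would therefore amplify the determinant without leaving the admissible family. Border $A$ by one extra row and column to obtain the $(n+1) \times (n+1)$ interval matrix $\A = [A_c - E, A_c + E]$ with midpoint $A_c = \diag(A, d)$, where $d$ is a positive integer fixed below; the new off-diagonal entries are then the intervals $[-1, 1]$ and the new diagonal entry is $[d - 1, d + 1]$. Thus $A_c$ is rational positive definite and $\A$ has radius $E$, exactly the required form.

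For any point matrix of $\A$, expanding by the Schur complement of the positive corner entry $d + \theta$ (with $|\theta| \le 1$) writes its determinant as $(d + \theta)\det(S)$, where $S$ differs from some matrix of $[A - E, A + E]$ by a rank-one term whose entries are bounded by $1/(d - 1)$. Since the determinant of an $n \times n$ matrix with entries of magnitude at most $\mu$ is Lipschitz in the entrywise metric with a constant $C \le n \cdot n!\, \mu^{n-1}$ of polynomial bit-length, perturbing each entry by at most $1/(d-1)$ alters the determinant by at most $C/(d-1)$. I would then pick $d - 1 > \max\{\, 2C q^{n},\, 4 \varepsilon q^{n} \,\}$, which still has polynomially many bits. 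With this choice $C/(d-1) \le \tfrac12 q^{-n}$ stays below half the gap, so $\A$ is regular if and only if $[A - E, A + E]$ is: in the regular case every $\det(S) \ge \tfrac12 q^{-n}$, whence $\ul{\det(\A)} \ge \tfrac12 (d - 1) q^{-n} > 2\varepsilon$, while in the singular case $0 \in \det(\A)$ (take the border entries equal to $0$ and a singular matrix of $[A - E, A + E]$ in the leading block).

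Finally, I would run the assumed algorithm on $\A$ to get $[\ul a, \ol a]$ with $\det(\A) \subseteq [\ul a, \ol a] \subseteq \det(\A) + [-\varepsilon, \varepsilon]$. If $\A$ is regular then $\ul a \ge \ul{\det(\A)} - \varepsilon > \varepsilon > 0$, whereas if $\A$ is singular then $\ul a \le \ul{\det(\A)} \le 0$; hence the sign of $\ul a$ decides regularity of $[A - E, A + E]$ in polynomial time, which forces $\mathrm{P} = \mathrm{NP}$. The main obstacle is precisely this amplification: the determinant must be pushed past the constant $\varepsilon$ without leaving the family $[A_c - E, A_c + E]$ and without turning a barely regular matrix singular, and the three scales involved -- the exponentially small gap $q^{-n}$, the border magnitude $d$, and the fixed error $\varepsilon$ -- have to be balanced so that $d$ keeps polynomial bit-length.
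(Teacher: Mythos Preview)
Your argument is correct and follows a genuinely different route from the paper. The paper first clears denominators by multiplying each row of $\A$ by the product of the denominators occurring in that row, obtaining an interval matrix $\A'$ with integer endpoints (so any nonzero value of $\det(\A')$ has absolute value at least~$1$), and then multiplies one row by $2\varepsilon$ before calling the approximation oracle on the resulting matrix~$\A''$. Your approach instead enlarges the matrix by bordering with a single large diagonal entry~$d$ and controls the effect of the new off-diagonal $[-1,1]$ entries via a Schur-complement/Lipschitz estimate. The paper's argument is shorter and avoids this bookkeeping, but your construction is more faithful to the hypothesis as literally stated: after the paper's row scalings the matrix $\A''$ no longer has radius matrix~$E$ (and its midpoint need not be symmetric, let alone positive definite), so the approximation oracle is being invoked outside the family $[A_c-E,\,A_c+E]$ with $A_c$ rational positive definite on which it was assumed to exist. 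Your bordering keeps $A_c=\diag(A,d)$ rational positive definite and the radius exactly~$E$, hence proves the theorem under the stronger hypothesis that the oracle works only on this restricted family. One minor point: your Lipschitz constant should read $n\cdot n!\,(\mu+1)^{n-1}$ rather than $n\cdot n!\,\mu^{n-1}$, since the perturbed matrix $S$ may have entries slightly larger than those of $B$; this does not affect the polynomial bit-length of~$d$.
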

\begin{proof}
Let matrix $A_c$ consist of rational numbers with nominator and denominator representable with $k$ bits (we can take $k$ as the maximum number of bits needed for any nominator or denominator). Then nominators and denominators of coefficients in $A_c -E$ and $A_c + E$ are also representable using $O(k)$ bits. For each row we can multiply these matrices with product of all denominators from both matrices in the corresponding row. Now, each denominator uses still $k$ bits and each nominator uses $O(nk)$ bits. We obtained a new matrix $\A'$. The whole matrix now uses $O(n^3k)$ bits which is polynomial in $n$.  

We only multiplied by nonzero constants therefore the following property is holds 
$$ 0 \notin \det(\A) \Longleftrightarrow 0 \notin \det(\A').$$
After cancellation the new matrix $\A'$ has integer bounds. Its determinant must also have integer bounds. Therefore deciding whether $\A'$ is regular means deciding whether $|\det(\A')| \geq 1$. 
We can multiply one arbitrary row of $\A'$ by $2\varepsilon$ and get a new matrix $\A''$ having $\det(\A'') = 2\varepsilon \det(\A')$.
Now, we can apply the approximation algorithm and compute absolute approximation $[\ul{a}'', \ol{a}'']$ of the determinant of $\A''$.  Let $\det(\A') \geq 1$. Then $\det(\A'') \geq 2\varepsilon$ and the lower bound of absolute approximation is
$$\ul{a}'' \geq \ul{\det(\A'')} - \varepsilon \geq \varepsilon > 0, $$ 
On the other hand, if $\ul{a}'' > 0$ then 
$$ \ul{\det(\A')} / 2\varepsilon =  \ul{\det(\A'')} \geq \ul{a}''> 0.$$
Hence, even $\ul{\det(\A')} > 0$ and since it is an integer it must be greater or equal to 1.
The case of $\det(\A') \leq -1$ is handled similarly.
Therefore, $ 0 \notin \det(\A) \Longleftrightarrow 0 \notin \det(\A') \Longleftrightarrow 0\notin [\ul{a}'', \ol{a}''].$
\end{proof}

\section{Enclosures of determinants -- general case}

\subsection{Gaussian elimination}
To compute a determinant of an interval matrix, we can use the well known Gaussian elimination -- after transforming a matrix to the row echelon form an enclosure of determinant is computed as the product of intervals on the main diagonal. 
For more detailed description of the interval Gaussian elimination see for example \cite{AleHer1983,horacek2013computing,neumaier1990interval}. 
Gaussian elimination is suitable to be used together with some form of preconditioning (more details will be explained in section \ref{sec:precond}). We would recommend the midpoint inverse version as was done in \cite{smith1969interval}.   

%
%

\subsection{Gerschgorin discs}
It is a well known result that a determinant of a real matrix is a product of its eigenvalues. 
To obtain an enclosure of an interval determinant, any method returning enclosures of eigenvalues of a general interval matrix can be used, e.g.,  \cite{Hla2013a, HlaDan2010, Kol2010, May1994}. Here we will employ simple but useful bounds based on the well known Gerschgorin circle theorem.
This classical result claims that for a square real $n \times n$ matrix $A$ its eigenvalues lie inside the circles in complex plane with centers $A_{ii}$ and radius $\sum_{j \neq i} | A_{ij}|$. 
When $\A$ is an interval matrix, to each real matrix  $A \in \A$ there corresponds a set of Gerschgorin discs. Shifting coefficients of $A$ shifts or scales the discs. All discs in all situations are contained inside
discs with centers $ \midp(\A_{ii})$ and radii $\rad(\A_{ii}) + \sum_{j \neq i}  \magni(\A_{ij})$ as depicted in Figure \ref{fig:intervaldisk}.

As in the case of real Gerschgorin discs, it is also well known that in the union of $k$ intersecting circles there somewhere lie $k$ eigenvalues. By intersecting circles we mean that their projection on the horizontal axis is a continuous line. That might complicate the situation a bit. In the intersection of $k$ discs there lie $k$ eigenvalues and their product contributes to the total determinant. That is why, we can deal with each bunch of intersecting discs separately. We compute a verified interval enclosure of a product of $k$ eigenvalues regardless of their position inside this bunch. The computation of the verified enclosure will depend on the number of discs in the bunch (odd/even) and on whether the bunch contains the point 0. In Figure \ref{fig:gersch} all the possible cases and resulting verified enclosures are depicted. 
The resulting determinant will be a product of intervals corresponding to all bunches of intersecting discs.

\begin{figure}[ht]
	\begin{center}
		\epsfig{file=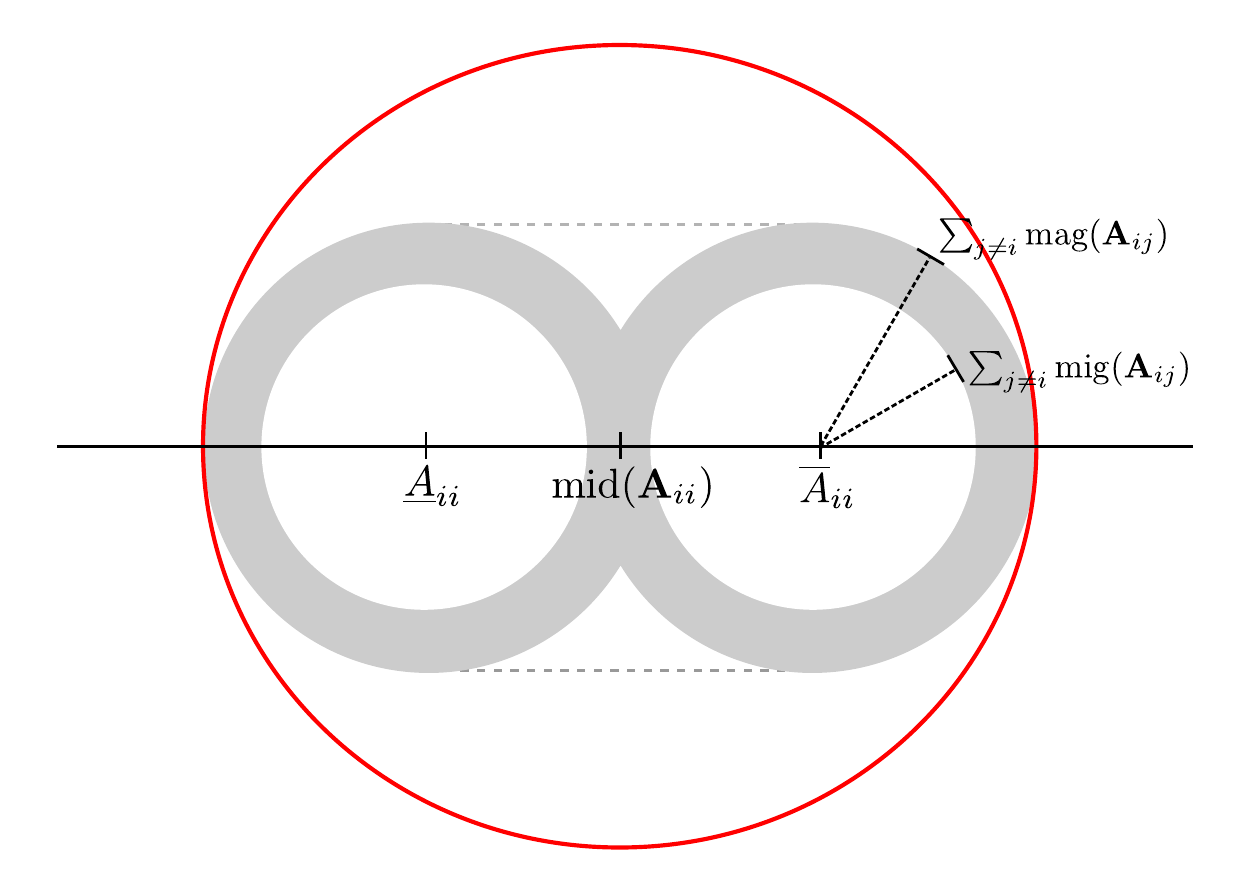,width=7cm,clip=}
		\caption{One interval Gerschgorin disc (large red circle). The grey area mimics the scaling and shifting of a real disc when changing matrix coefficients within their intervals.} 
	\end{center}
	\label{fig:intervaldisk}
\end{figure}

\begin{figure}[ht]
	\begin{center}
		\epsfig{file=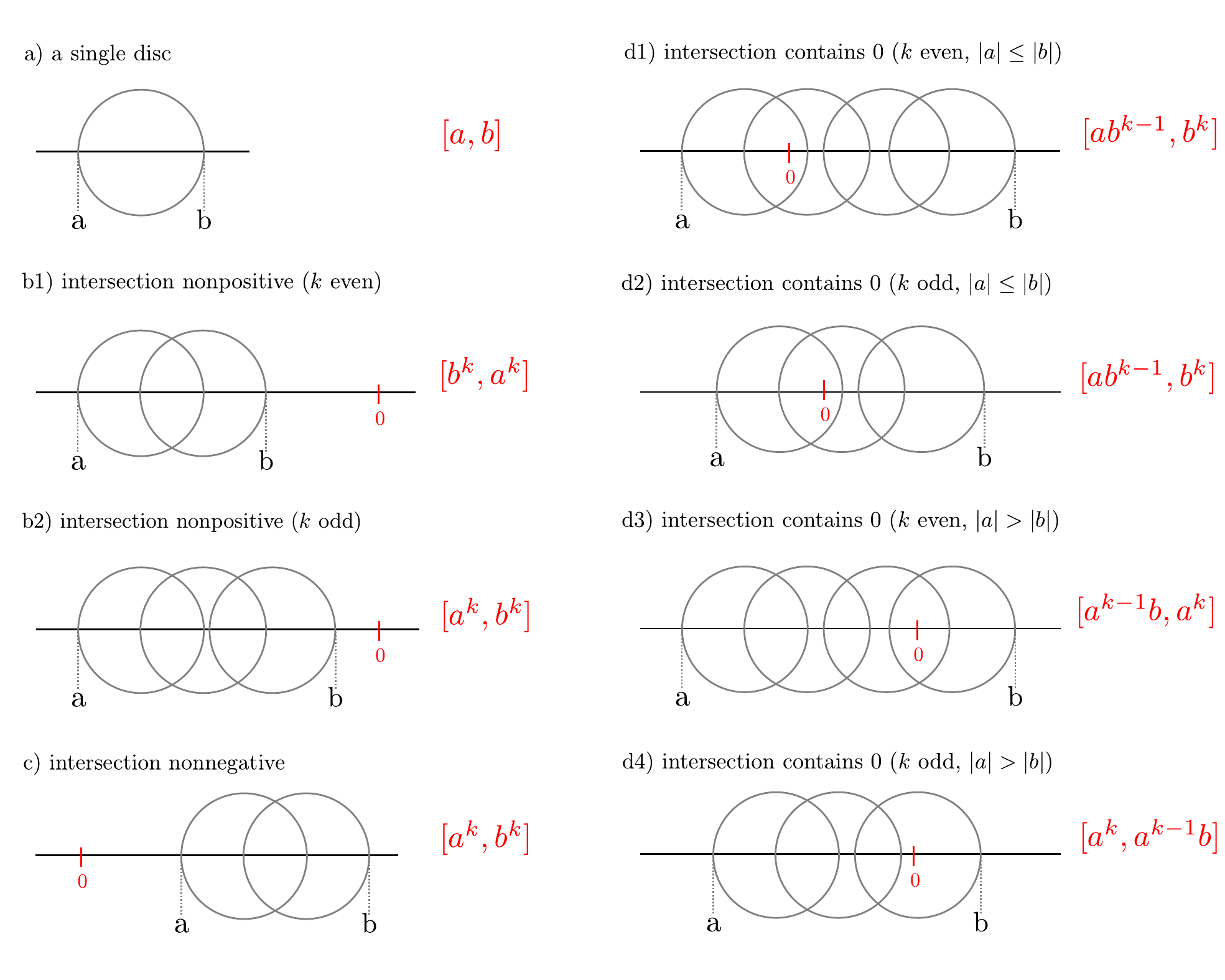,width=150mm,clip=}
\caption{Verified enclosures of product of eigenvalues inside a bunch of $k$ intersecting discs -- all cases.} 
	\end{center}
	\label{fig:gersch}
\end{figure}

The formulas of enclosures are based on the following simple fact. The eigenvalue lying inside an intersection of circles can be real or complex ($c + bi$). In the second case the conjugate complex number $c -bi$ is also an eigenvalue. Their product
$b^2 + c^2$ can be enclosed from above by $a^2$ as depicted in Figure \ref{fig:hypotenuse}. The whole reasoning is based on Pythagorean theorem and geometric properties of hypotenuse.   

\begin{figure}[ht]
	\begin{center}
		\epsfig{file=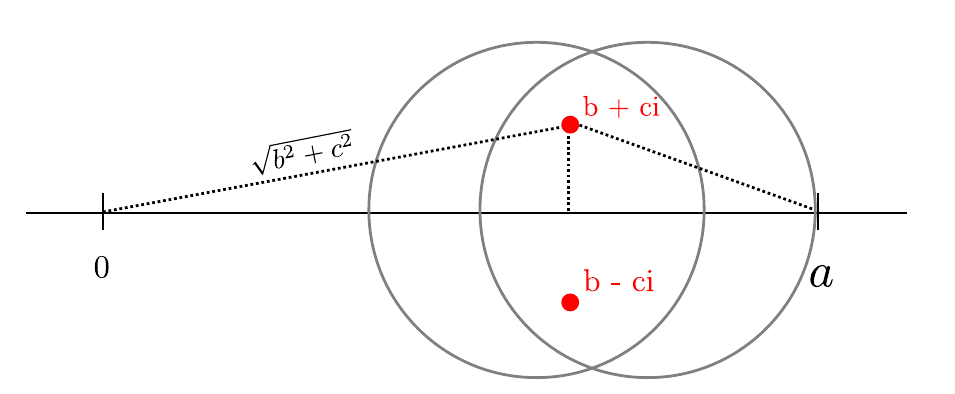,width=7cm,clip=}
		\caption{Enclosing product of two complex eigenvalues.} 
	\end{center}
	\label{fig:hypotenuse}
\end{figure}

%
%

\subsection{Hadamard inequality}
A simple but rather crude enclosure of interval determinant can be obtained by the well known Hadamard inequality. For an $n \times n$ real matrix $A$ we have
$ | \det(A)| \leq \prod_{i=1}^{n} \|A_{*i} \|,$
where $\| A_{*i}\|$ is the Euclidean norm of $i$-th column of $A$. 
This inequality is simply transformable for the interval case. Since the inequality holds for every $A \in \A$ we have

$$ \det(\A) \subseteq \left[-d, +d\right], \ \textrm{where} \ d = \prod_{i=1}^{n} \|\A_{*i} \|.$$
It is a fast and simple method. A drawback is that the obtained enclosure is often quite wide. A second problem is that it is impossible to detect the sign of the determinant, which might be sometimes useful.  

\subsection{Cramer's rule}
Our next method is based on Cramer's rule. It exploits methods for computing enclosure of a solution set of a square interval linear system. There are plenty of such algorithms, i.e., \cite{Hla2014b,moore2009introduction,neumaier1990interval,Rum2010}. Here we use the method "\textbackslash" built in Octave interval package.
When solving a real system $A x = e_1$ using Cramer's rule we obtain 
 $$ \det(A) = \frac{\det(A_{2:n})} {x_1},$$ 
where  $\det(A_{2:n})$ emerges by 
omitting the first row and column from $A$ and $x_1$ is the first coefficient of the solution of $Ax=e_1$. We have reduced our problem of determinant computation to a problem with lower dimension and we can repeat the same procedure iteratively until the determinant in the numerator is easily computable. For an interval matrix $\A$ we actually get
\begin{align}\label{detCramerEncl}
\det(\A) \subseteq \det(\A_{2:n}) / \X_1,
\end{align}
where $\X_1$ is an interval enclosure of the first coefficient of the solution of $\A x = e_1$, computed by some of the cited methods. Notice that we can use arbitrary index $i$ instead of 1. The method works when all enclosures of $\X_1$ in the recursive calls do not
contain 0.

\subsection{Monotonicity checking}

The derivative of a real nonsingular matrix $A\in\R^{n\times n}$ is $\frac{\partial \det(A)}{\partial A}=\det(A)A^{-T}$. Provided the interval matrix $\A$ is regular and $\tluste{B}$ is an interval enclosure for the set $\{A^{-T}\mid A\in\A\}$, then $0\not\in\det(\A)$ and the signs of $\det(\Mid{A})\tluste{B}_{ij}$ give information about monotonicity of the determinant. As long as $0$ is not in the interior of $\tluste{B}_{ij}$, then we can do the following reasoning. If $\det(\Mid{A})\tluste{B}_{ij}$ is a nonnegative interval, then $\det(A)$ is nondecreasing in $A_{ij}$, and hence its minimal value is attained at $A_{ij}=\unum{A}_{ij}$. Similarly for $\det(\Mid{A})\tluste{B}_{ij}$ nonpositive. 

In this way, we split the problem of computing $\det(\A)$ into two sub-problems of computing the lower and upper bounds separately. For each subproblem, we can fix those interval entries of $\A$ at the corresponding lower or upper bounds depending on the signs of $\tluste{B}_{ij}$. This makes the set $\A$ smaller in general. We can repeat this process or call another method for the reduced interval matrix.

Notice that there are classes of interval matrices the determinant of which is automatically monotone. They are called inverse stable \cite{Roh1993c}. Formally, $\A$ is inverse stable if $|A^{-1}|>0$ for each $A\in\A$.
This class also includes interval M-matrices \cite{BarNud1974}, inverse nonnegative \cite{Kut1971} or totally positive matrices \cite{Gar1982} as particular subclasses that are efficiently recognizable; cf.  \cite{Hla2017da}.

\subsection{Preconditioning}
\label{sec:precond}
In the interval case by preconditioning we mean transforming an interval matrix into a better form as an input for further processing. It is generally done by multiplying an interval matrix $\A$ by a real matrix $B$ from left and by a real matrix $C$ from right and we get some new matrix $B \A C$.
Regarding determinants, from properties of the interval arithmetics we easily obtain 
	$ \det(B) \cdot \det(\A) \cdot \det(C) \subseteq \det(B\A C)$ and we will further use the fact
	
	$$ \det(\A)\subseteq \frac{1}{\det(B)\det(C)} \cdot \det(B\A C).$$

There are many possibilities how to choose the matrices $B, C$ for a square interval matrix. 
As in \cite{hansen1967interval}, we can take the midpoint matrix $A_c$ and compute its LU decomposition $PA_c = LU$. When setting $B \approx L^{-1}P, C = I$, we get 
$$\det(\A) \subseteq \frac{1}{\det(P)} \cdot \det(L^{-1} P \A).$$

Another option is using an LDL decomposition. A symmetric positive definite matrix $A$ can be decomposed as $A = LDL^T$, where $L$ is upper triangular with ones on the main diagonal and $D$ being diagonal matrix. By setting  $B \approx L^{-1}, C \approx B^T$ and obtain
$$\det(\A) \subseteq \det(L^{-1} \A L^{-T}).$$

In interval linear system solving, there are various preconditioners utilized depending on criteria used \cite{Hla2016b,Kea1990}.
The most common choice is 
taking $B \approx A^{-1}_c, C = I$ when $A_c$ is regular.  Then
$$ \det(\A) \subseteq \det(A^{-1}_c \A) / \det(A^{-1}_c).$$
Unlike the previous real matrices, the matrix $A^{-1}_c$  does not have to have its determinant equal to $\pm$1. We need to compute a verified determinant of a real matrix. In \cite{ogita2011accurate} there are many variants of algorithms for computation of verified determinants of real matrices. We use the one by Rump \cite{rump2005computer}. 

\section{Enclosures of determinants -- special cases}
Even though we are not going to compare all of the mentioned methods in this section, for the sake of completeness we will mention some cases of matrices that enable the use of other tools. For special classes of interval matrices we prove new results stating that it is possible to compute exact bounds of their determinants in polynomial time. 

\subsection{Symmetric matrices}
Many problems in practical applications are described using symmetric matrices. We specify what we mean by an interval symmetric matrix by the following definition. 
\begin{definition}[Symmetric interval matrix] For a square interval matrix $\A$ we define 
	$$ \A^S = \{ A \in \A, \ A = A^T  \}.$$	
	\end{definition}
Next we define its eigenvalues.
\begin{definition} For a real symmetric matrix $A$ let $\lambda_1 \geq \lambda_2 \geq \ldots \geq \lambda_n$ be its eigenvalues. 
	For $\A^S$ we define its $i$-th set of eigenvalues as $\tluste{\lambda}_i(\A) = \{ \lambda_i(A), \ A \in \A \}.$
	\end{definition}
For symmetric interval matrices there exist various methods to enclose each $i$-th set of eigenvalues. A simple enclosure can be obtained by the following theorem in \cite{HlaDan2010,Roh2012a}.

\begin{theorem}
	\label{th:eig}
	$\tluste{\lambda}_i(\A^S) \subseteq [ \lambda_i(A_c) - \varrho(A_\Delta),  \lambda_i(A_c) + \varrho(A_\Delta)]$
\end{theorem}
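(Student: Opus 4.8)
The plan is to verify the claimed inclusion pointwise: it suffices to show that for every symmetric $A \in \A^S$ the $i$-th eigenvalue $\lambda_i(A)$ lies in the stated interval, and then to take the union over all such $A$. So I fix an arbitrary symmetric $A \in \A^S$ and write it as a perturbation of the midpoint, $A = A_c + E$ with $E = A - A_c$. Under the standing assumption that $\A$ is a symmetric interval matrix (so that $A_c$ and $A_\Delta$ are symmetric, and $\lambda_i(A_c)$ is real), the perturbation $E$ is a real symmetric matrix, and membership $A \in \A$ yields the entrywise bound $|E| \leq A_\Delta$.

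First I would control the eigenvalue shift by a classical perturbation estimate. By Weyl's inequality for symmetric matrices, ordering eigenvalues decreasingly as $\lambda_1 \geq \cdots \geq \lambda_n$, one has $\lambda_i(A_c) + \lambda_n(E) \leq \lambda_i(A_c + E) \leq \lambda_i(A_c) + \lambda_1(E)$. Since both $|\lambda_1(E)|$ and $|\lambda_n(E)|$ are at most the spectral radius $\varrho(E)$, this gives $|\lambda_i(A) - \lambda_i(A_c)| \leq \varrho(E)$.

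The heart of the argument is then to replace the data-dependent quantity $\varrho(E)$ by the uniform bound $\varrho(A_\Delta)$. For the symmetric matrix $E$ the spectral radius coincides with the spectral norm, $\varrho(E) = \|E\|_2$, and one has the general estimate $\varrho(E) \leq \varrho(|E|)$, where $|E|$ denotes the entrywise absolute value. Finally, $0 \leq |E| \leq A_\Delta$ together with the monotonicity of the spectral radius on nonnegative matrices (a consequence of Perron--Frobenius theory) yields $\varrho(|E|) \leq \varrho(A_\Delta)$. Chaining these estimates produces $|\lambda_i(A) - \lambda_i(A_c)| \leq \varrho(A_\Delta)$, that is, $\lambda_i(A) \in [\lambda_i(A_c) - \varrho(A_\Delta),\ \lambda_i(A_c) + \varrho(A_\Delta)]$; since $A$ was arbitrary, taking the union over all $A \in \A^S$ completes the proof.

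The step I expect to be the main obstacle is precisely this comparison $\varrho(E) \leq \varrho(A_\Delta)$: it is here that symmetry of $E$ is essential (ensuring $\varrho(E) = \|E\|_2$), and where one must combine the general bound $\varrho(E) \leq \varrho(|E|)$ with Perron--Frobenius monotonicity applied to the entrywise domination $|E| \leq A_\Delta$. Everything else -- the reduction to a single symmetric $A$ and the Weyl perturbation bound -- is routine once this estimate is in hand.
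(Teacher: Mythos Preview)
Your argument is correct: Weyl's inequality gives $|\lambda_i(A)-\lambda_i(A_c)|\le\varrho(A-A_c)$, and then the chain $\varrho(A-A_c)\le\varrho(|A-A_c|)\le\varrho(A_\Delta)$ (the first inequality being the general bound $\varrho(M)\le\varrho(|M|)$, the second Perron--Frobenius monotonicity applied to $0\le|A-A_c|\le A_\Delta$) finishes the job. Note, however, that the paper does not actually prove this theorem; it merely quotes it from the references \cite{HlaDan2010,Roh2012a}, and your proof is precisely the standard argument found there, so there is nothing to compare against in the paper itself.

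Two minor remarks. First, the identity $\varrho(E)=\|E\|_2$ you mention is true but not needed in your chain, since $\varrho(M)\le\varrho(|M|)$ holds for arbitrary real matrices; symmetry of the perturbation is really only used to invoke Weyl. Second, in this paper the symbol $E$ is reserved for the all-ones matrix, so for consistency with the surrounding text you would want to rename your perturbation matrix (e.g.\ write $A=A_c+B$ with $|B|\le A_\Delta$).
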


There exist various other approaches for computing enclosures of the eigenvalues, including \cite{Kol2006,LenHe2007}. There are several iterative improvement methods \cite{Bea2000,hladik2011filtering}, too. For the exact minimum and maximum extremal eigenvalues, there is a closed-form expression \cite{Her1992}, which is however exponential.

\subsection{Symmetric positive definite matrices}

Let $\A^S$ be a symmetric positive definite matrix, that is, every $A\in\A^S$ is positive definite. Checking positive definiteness of a given symmetric interval matrix is NP-hard \cite{KreLak1998,Roh1994}, but there are various sufficient conditions known \cite{Roh1994b}.

The matrix with maximum determinant can be found by solving the optimization problem
\begin{align*}
\max\ \log\det(A) \mbox{ subject to }A\in\A^S
\end{align*}
since $\log$ is an increasing function and $\det(A)$ is positive on $\A^S$. This is a convex optimization problem that is solvable in polynomial time using interior point methods; see Boyd \& Vandenberghe \cite{BoyVan2004}.
Therefore, we have:

\begin{proposition}
The maximum determinant of a symmetric positive definite matrix is computable in polynomial time.
\end{proposition}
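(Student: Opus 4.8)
The plan is to recognize the problem as a log-concave maximization over a convex feasible set and then invoke a standard polynomial-time solver. First I would reduce the objective: since every $A\in\A^S$ is positive definite, we have $\det(A)>0$ throughout the feasible region, so $\log\det(A)$ is well defined there. Because $\log$ is strictly increasing, a maximizer of $\log\det(A)$ over $\A^S$ is exactly a maximizer of $\det(A)$, and the two optimal values determine each other. This lets me replace the determinant by its logarithm without changing the argmax, trading a high-degree polynomial objective for a smooth concave one.

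Second, I would verify that the resulting program is convex. The feasible set $\A^S$ is the intersection of the interval box $\{A:\ul{A}\le A\le\ol{A}\}$ with the linear subspace $\{A:A=A^T\}$; both are convex, so $\A^S$ is a convex polyhedral set, and since it is also bounded and closed it is compact, which guarantees the maximum is attained. The objective $\log\det(A)$ is concave on the open cone of symmetric positive definite matrices, a classical fact whose standard proof restricts $\log\det$ to an arbitrary line segment $A+tH$ and checks that the resulting one-dimensional function is concave via the eigenvalues of $A^{-1/2}HA^{-1/2}$. Maximizing a concave function over a convex set is a convex optimization problem.

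Third, I would appeal to interior point methods: as described in Boyd \& Vandenberghe, maximizing $\log\det$ subject to the linear box and symmetry constraints is solvable to any prescribed accuracy in time polynomial in the problem size and in the number of accuracy digits. This yields the claimed polynomial-time computability.

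The main obstacle, I expect, is less the convexity, which is standard, than making the phrase \emph{computable in polynomial time} precise. Interior point methods return an $\varepsilon$-approximate optimizer rather than an exact one, so the honest statement is polynomial-time computability to arbitrary fixed precision; one must also confirm that the feasible set is nonempty (immediate here, since $A_c\in\A^S$) and that the optimum is finite (immediate by compactness). A fully rigorous treatment would track how the bit-complexity of $\ul{A},\ol{A}$ feeds into the iteration count, but for the purposes here citing the standard interior point analysis suffices.
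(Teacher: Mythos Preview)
Your proposal is correct and follows essentially the same route as the paper: replace $\det(A)$ by $\log\det(A)$, note this is a convex optimization problem (concave objective over the convex set $\A^S$), and invoke interior point methods with a reference to Boyd \& Vandenberghe. Your version is more explicit about why $\A^S$ is convex and compact and about the approximation caveat, but the argument is the same.
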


\subsection{Matrices with $A_c=I$}

Preconditioning $\inum{A}$ by $A^{-1}_c$ results in an interval matrix the center of which is the identity matrix $I$. This motivates us to study such matrices more in detail. Suppose that $\inum{A}$ is such that $A_c=I$. Such matrices have very useful properties. For example, solving interval linear systems is a polynomial problem \cite{Roh1993}. Also checking regularity of $\inum{A}$ can be performed effectively just by verifying $\rho(\Rad{A})<1$; see \cite{neumaier1990interval}.

\begin{proposition}
Suppose that $\rho(\Rad{A})<1$. Then the minimum determinant of $\inum{A}$ is attained for $\unum{A}$.
\end{proposition}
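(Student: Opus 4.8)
The plan is to show the stronger pointwise statement $\det(A) \ge \det(\unum{A})$ for every $A \in \inum{A}$; since $\unum{A} \in \inum{A}$ this immediately gives that the minimum is attained at $\unum{A}$. Write $A = I + D$, where $D$ ranges over the box $[-\Rad{A},\Rad{A}]$, and set $R := \Rad{A}$, so that $|D| \le R$ entrywise and $\unum{A} = I - R$. First I would record the facts that make everything well defined: since $R \ge 0$ and $0 \le |D| \le R$, monotonicity of the Perron root gives $\rho(D) \le \rho(|D|) \le \rho(R) < 1$. Consequently every eigenvalue $\lambda$ of $D$ satisfies $\mathrm{Re}(1+\lambda) \ge 1 - |\lambda| > 0$, so $\det(I+D) > 0$, and likewise $\det(I - R) > 0$ because $I - R$ is a nonsingular M-matrix. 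Hence the real logarithms used below are defined.

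The key step is the trace--log identity. For any matrix $M$ with $\rho(M) < 1$ one has $\log\det(I+M) = \sum_{k\ge 1} \frac{(-1)^{k+1}}{k}\,\mathrm{tr}(M^k)$, the series converging absolutely. Applying this to $M = D$ and to $M = -R$ and subtracting, I would reduce the desired inequality $\det(I+D) \ge \det(I-R)$ to
\begin{equation}
\sum_{k \ge 1} \frac{1}{k}\Big[\mathrm{tr}(R^k) - (-1)^{k}\,\mathrm{tr}(D^k)\Big] \ge 0,
\end{equation}
so that it suffices to prove the termwise bound $\mathrm{tr}(R^k) \ge (-1)^{k}\,\mathrm{tr}(D^k)$ for every $k \ge 1$.

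This termwise bound is where the sign structure enters, and it follows by reading the trace as a sum over closed walks. Expanding, $\mathrm{tr}(D^k) = \sum d_{i_1 i_2} d_{i_2 i_3}\cdots d_{i_k i_1}$ over all index cycles, and $\mathrm{tr}(|D|^k)$ is the same sum with each summand replaced by its absolute value; the triangle inequality therefore gives $\mathrm{tr}(|D|^k) \ge |\mathrm{tr}(D^k)| \ge (-1)^k \mathrm{tr}(D^k)$. Since $0 \le |D| \le R$ forces $|D|^k \le R^k$ entrywise and hence $\mathrm{tr}(|D|^k) \le \mathrm{tr}(R^k)$, chaining the two inequalities yields $\mathrm{tr}(R^k) \ge (-1)^k \mathrm{tr}(D^k)$, as needed. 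This establishes $\det(A) \ge \det(\unum{A})$ for all $A \in \inum{A}$ and completes the argument.

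The routine-but-necessary care, and the only real obstacle, is justifying the trace--log machinery: one must check absolute convergence of both series (guaranteed by $\rho(R) < 1$, since $|\mathrm{tr}(M^k)| \le n\,\rho(|M|)^k$) and that the formula returns the real principal logarithm of the positive determinants (the eigenvalues $1+\lambda$ lie in the open right half-plane, and the complex eigenvalues of the real matrices occur in conjugate pairs, so the imaginary parts cancel). An alternative that avoids infinite series is to reduce to a vertex of the box by multilinearity of the determinant --- at a minimizing vertex $A^\ast = I + D^\ast$ one has $|D^\ast| = R$ exactly --- and then to combine the single inequality $\det(I + D^\ast) \ge \det(I - |D^\ast|)$ with the M-matrix monotonicity $\det(I-|D|) \ge \det(I-R)$ (whose derivative in each entry is a signed cofactor that is nonpositive because $(I-B)^{-1} \ge 0$ for an M-matrix). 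The trace--log route, however, proves both inequalities simultaneously and is the cleaner option.
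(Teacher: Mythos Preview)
Your proof is correct and takes a genuinely different route from the paper's. The paper argues by induction on $n$: writing $\det(A)=\det(A_{2:n})/x_1$ via Cramer's rule (where $x_1$ is the first coordinate of the solution of $Ax=e_1$), it invokes the inductive hypothesis on the $(n-1)\times(n-1)$ submatrix for the numerator and an external result of Rohn on interval systems with $A_c=I$ to conclude that $x_1$ is maximized at $A=\unum{A}$; since $\unum{A}$ simultaneously minimizes the numerator and maximizes the (positive) denominator, it minimizes the ratio.

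Your trace--log argument is self-contained and proves the pointwise inequality $\det(I+D)\ge\det(I-R)$ for every $D$ with $|D|\le R$ in one stroke, with no induction and no appeal to the structure theory of interval linear systems. The termwise bound $\mathrm{tr}(R^k)\ge|\mathrm{tr}(D^k)|\ge(-1)^k\mathrm{tr}(D^k)$ is the clean core, and your bookkeeping for the analytic side is in order: $\rho(D)\le\rho(|D|)\le\rho(R)<1$ gives absolute convergence, the eigenvalues $1+\lambda$ lie in the open unit disc about $1$ so the principal logarithms are defined, and conjugate pairing of the spectrum of a real matrix makes the sum real and equal to $\log\det(I+D)$. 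The price you pay relative to the paper is the series machinery; what you gain is independence from the cited Rohn result and a proof that visibly yields the full pointwise inequality rather than just the location of the minimizer. One minor remark on your closing paragraph: once you pass to a minimizing vertex $D^\ast$ you already have $|D^\ast|=R$, so the M-matrix monotonicity step $\det(I-|D|)\ge\det(I-R)$ is redundant there; this does not affect the main argument.
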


\begin{proof}
	
We will proceed by mathematical induction. Case $n=1$ is trivial. 
	
We will proceed by mathematical induction. Case $n=1$ is trivial. For the general case, we express the determinant of $A\in\A$ as in (\ref{detCramerEncl})
\begin{align*}
\det(A) = \det(A_{2:n}) / x_1.
\end{align*}
By induction, the smallest value of $\det(A_{2:n})$  is attained for $A_{2:n}=\unum{A}_{2:n}$. Since $A_c = I$ and $\A$ is regular $\det(A) > 0, \det(A_{2:n}) > 0$, therefore $x_1 > 0$  and as it is the first coefficient of the solution of $A x = e_1$, its largest value is attained for $A=\unum{A}$; see \cite{Roh1993}. Therefore $A=\unum{A}$ simultaneously minimizes the numerator and maximizes the denominator.
\end{proof}

\begin{example} If the condition $\rho(\Rad{A})<1$ does not hold, then the claim is not true in general.  Consider the matrix $\A = [A_c - A_\Delta, A_c + A_\Delta]$ where
$$ \small A_c = \left( \begin{array}{ccc}
1 & 0 & 0\\
0 & 1 & 0 \\
0 & 0 & 1
\end{array} \right), \quad A_\Delta = \left( \begin{array}{ccc}
1 & 1 & 1\\
1 & 1 & 1 \\
1 & 1 & 1
\end{array} \right).$$
We have $\varrho(A_\Delta) = 3$ and $\det(\ul{A}) = -2$, however, $\det(\A) = [-6, 14]$. The minimum bound is attained e.g., for the matrix
$$ \small \left( \begin{array}{ccc}
0 & -1 & 1\\
-1 & 2 & 1 \\
1 & 1 & 2
\end{array} \right).$$  
\end{example}

Computing the maximum determinant of $\inum{A}$ is a more challenging problem, and it is an open question whether is can be done efficiently in polynomial time. 
Obviously, the maximum determinant of $\inum{A}$ is attained for a matrix $A\in\inum{A}$ such that $A_{ii}=\onum{A}_{ii}$ for each $i$. Specifying the off-diagonal entries is, however, not so easy. 

\subsection{Tridiagonal H-matrices}

Consider an interval tridiagonal matrix
\begin{align*}
\small
\A=\begin{pmatrix}
 \inum{a}_1  & \inum{b}_2  &  0  & \dots  & 0 \\
 \inum{c}_2  & \inum{a}_2  & \inum{b}_3 & \ddots & \vdots \\
 0    & \inum{c}_3  & \inum{a}_3 & \ddots & 0 \\
\vdots&\ddots&\ddots& \ddots & \inum{b}_n \\
 0    &\dots & 0   & \inum{c}_n & \inum{a}_n
\end{pmatrix}.
\end{align*}
Suppose that it is an interval H-matrix, which means that each matrix $A\in\inum{A}$ is an H-matrix. Interval H-matrices are easily recognizable, see, e.g., Neumaier \cite{Neu1984,neumaier1990interval}.

Without loss of generality let us assume that the diagonal is positive, that is, $\unum{a}_i>0$ for all $i=1,\dots,n$. Otherwise, we could multiply the corresponding rows by~$-1$.
Recall that the determinant $D_n$ of a real tridiagonal matrix can be computed by a recursive formula as follows
\begin{align*}
D_n = a_n D_{n-1}-b_nc_n D_{n-2}.
\end{align*}
Since $\A$ is an H-matrix with positive diagonal, the values of $D_1,\dots,D_n$ are positive for each $A\in\A$. Hence the largest value of $\det(A)$ is attained at $a_i:=\onum{a}$ and $b_i,c_i$ such that $b_ic_i=\unum{b_i c_i}$. Analogously for the minimal value of $\det(A)$.
Therefore, we have:

\begin{proposition}
Determinants of interval tridiagonal H-matrices are computable in polynomial time.
\end{proposition}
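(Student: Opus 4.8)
The plan is to establish the polynomial-time computability by exhibiting an explicit pair of point matrices in $\A$ attaining the minimal and maximal determinant, and then arguing that evaluating the recursive determinant formula on these matrices is polynomial. The key structural fact I would lean on is the recursion $D_n = a_n D_{n-1} - b_n c_n D_{n-2}$ together with the H-matrix hypothesis, which after the harmless sign normalization $\unum{a}_i > 0$ guarantees that every principal leading minor $D_k$ is positive for all $A \in \A$. This positivity is what lets the optimization decouple into independent coordinate choices.

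First I would argue positivity of all $D_1, \dots, D_n$ carefully, since this is the load-bearing step. For an H-matrix the comparison matrix is an M-matrix, hence has positive leading principal minors; I would note that the leading $k \times k$ submatrix of a tridiagonal H-matrix is again a tridiagonal H-matrix with positive diagonal, so each $D_k > 0$ for every realization $A \in \A$ uniformly. Next, reading the recursion as a function of the entries, I would observe that $D_n$ is increasing in $a_n$ (since $D_{n-1} > 0$) and, because of the $-b_n c_n D_{n-2}$ term with $D_{n-2} > 0$, is decreasing in the product $b_n c_n$. Proceeding inductively down the diagonal, the same monotonicities propagate: to maximize $\det(A)$ one takes each diagonal entry $a_i := \onum{a}_i$ at its upper bound and each off-diagonal product $b_i c_i$ as small as possible, i.e.\ $b_i c_i = \unum{b_i c_i}$, while the minimal determinant is obtained by the opposite choices. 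The products $\inum{b}_i \inum{c}_i$ are scalar interval multiplications, so their extremal values are computed directly from the endpoints.

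The step I expect to be the main obstacle, and the one worth stating precisely, is the claim that these extremal choices can be made \emph{independently} across all indices $i$, i.e.\ that no single point matrix is forced to compromise between competing rows. The subtlety is that the recursion couples consecutive levels through $D_{n-1}$ and $D_{n-2}$, so a naive term-by-term argument is not immediate. I would resolve this by a clean induction: assuming the extremal point matrices for the leading $(n-1)$ and $(n-2)$ submatrices are already fixed by the same coordinate-wise rule, the monotonicity in $a_n$ and in $b_n c_n$ shows the optimal choice at level $n$ is the stated endpoint regardless of the lower-level values, since those values are positive for \emph{every} admissible realization. Thus the optimum is achieved at a genuine vertex matrix whose entries are prescribed independently.

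Finally I would close by the complexity bookkeeping. Recognizing the interval H-matrix property is polynomial by the cited result of Neumaier; the sign normalization is a linear scan; selecting the $O(n)$ extremal endpoints is trivial; and evaluating the recursion $D_n = a_n D_{n-1} - b_n c_n D_{n-2}$ takes $O(n)$ arithmetic operations on each of the two chosen point matrices. Hence both $\ul{\det(\A)}$ and $\ol{\det(\A)}$, and therefore the exact interval $\det(\A)$, are computable in polynomial time, which is exactly the assertion of the proposition.
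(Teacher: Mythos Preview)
Your proposal is correct and follows essentially the same route as the paper: the sign normalization, the tridiagonal recursion $D_n = a_n D_{n-1} - b_n c_n D_{n-2}$, the positivity of all leading minors under the H-matrix hypothesis, and the resulting coordinate-wise monotonicity that pins down the extremal vertex matrices. If anything, you are more careful than the paper in justifying the positivity of the $D_k$ and in arguing explicitly (via induction) that the extremal choices decouple across indices, and you add the final complexity tally that the paper leaves implicit.
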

Complexity of the determinant computation for general tridiagonal matrices remains an open problem, si\-milarly as solving an interval system with tridiagonal matrix \cite{KreLak1998}. Nevertheless, not all problems regarding tridiagonal matrices
are open or hard, e.g., checking whether a tridiagonal matrix is regular can be done in polynomial time \cite{bar1996checking}.

\section{Comparison of methods}
In this section the described methods are compared. All these methods were implemented for Octave and its interval package by Oliver Heimlich \cite{heimlich2016gnu}. This package also contains function $\texttt{det}$, which computes an enclosure of the determinant of an interval matrix by LU decomposition, which is basically the same as the already described Gaussian elimination method and that is why we do not explicitly compare the methods against this function. All tests were run on an 8-CPU machine Intel(R) Core(TM) i7-4790K, 4.00GHz. 
 Let us start with general matrices first.

\subsection{General case}
For general matrices the following methods are compared: 

\begin{itemize}
	\item \texttt{GE} - interval Gaussian elimination
	\item \texttt{HAD} - interval Hadamard inequality
	\item \texttt{GERSCH} - interval Gerschgorin circles
	\item \texttt{CRAM} - our method based on Cramer's rule
\end{itemize}	
The suffix "\texttt{inv}" is added when the preconditioning with midpoint inverse was applied and "\texttt{lu}" is added when the preconditioning based on LU decomposition was used.  We use the string \texttt{HULL} to denote the
exact interval determinant.  

\begin{example}
	\label{ex:idet}
	 To obtain a general idea how the methods work, we can use the following example. Let us take the midpoint matrix $A_c$ and inflate it into an interval matrix using two fixed radii of intervals --
$0.1, 0.01$ respectively. 
\begin{align*} \small
A_c=\begin{pmatrix}
	1  & 2  &  3 \\
	4  & 6 &  7\\
	5    & 9  & 8 \\
\end{pmatrix}.
\end{align*}
The resulting enclosures of the interval determinant by all methods are shown in Table \ref{tab:exampledet}.
\end{example}

\begin{table}[h]
	\centering
			\footnotesize
			\renewcommand\arraystretch{1.1} 
	\begin{tabular}{lll}
		\hline
				\hline
		method     & $r=0.1$ & $r=0.01$  \\
		\hline
\texttt{HULL}& [4.060, 14.880] & [8.465, 9.545] \\
\texttt{GE}& [3.000, 21.857]& [8.275, 9.789] \\
\texttt{GEinv} & [3.600, 18.000]& [8.460, 9.560] \\
\texttt{GElu} & [1.440, 22.482]&[8.244, 9.791] \\
\texttt{CRAM} & [-$\infty$, $\infty$]& [8.326, 9.765] \\
\texttt{CRAMinv} & [3.594, 78.230]& [8.460, 9.588] \\
\texttt{CRAMlu} & [-$\infty$, $\infty$]& [8.244, 9.863] \\
\texttt{HAD}& [-526.712, 526.712]& [-493.855, 493.855] \\
\texttt{HADinv}& [-16.801, 16.801]&  [-9.563, 9.563] \\
\texttt{HADlu}& [-35.052, 35.052]& [-27.019, 27.019] \\
\texttt{GERSCH}& [-3132.927, 11089.567]& [-2926.485, 10691.619] \\
\texttt{GERSCHinv} & [-0.000, 72.000]& [6.561, 11.979] \\
\texttt{GERSCHlu}& [-11089.567, 6116.667]& [-10691.619, 5838.410] \\
\hline
		\hline
	\end{tabular}
	\caption[Determinant of A]{The exact interval determinant of the matrix from Example~\ref{ex:idet} and its enclosures computed by various methods. Enclosures bounds are rounded to 3-digits. The fixed radius of intervals is denoted by $r$.}
	\label{tab:exampledet}
\end{table} 

Based on this example it is not worth to test all methods, because some of them do not work well in comparison to others or do not work well without preconditioning. That is why we later test only -- \texttt{GEinv}, \texttt{CRAMinv}, \texttt{HADinv} and \texttt{GERSCHinv}.

We can perceive the method \texttt{GEinv} used in \cite{smith1969interval} as the "state-of-the-art" method. Therefore, every other method will be compared to it. Primarily, for a given matrix $\A$ and a $method()$ we compute the ratio of widths of interval enclosures of $\det(\A)$ computed by both methods as
$$ \textrm{rat}(\A) = \frac{\www ( method(\A) )}{\www( \texttt{GEinv}(\A))}.$$ We test all methods for sizes $n=5,10,15,20,\ldots, 50$ and random interval square matrices with given fixed radii of intervals ($10^{-3}$ or $10^{-5}$). We test on 100 matrices for each size. 
For each size and method  average ratio of computed enclosures, average computation time and its variance is computed. It can happen that an enclosure returned by a method is infinite. Such case is omitted from the computation of average or variance.  

The remaining part to be described is generation of random matrices. First, a random midpoint matrix 
with coefficients uniformly within bounds $\left[-1, 1\right]$ is generated. Then, it is inflated into an interval matrix with intervals having their radius equal to $10^{-3}$ or $10^{-5}$ respectively. 

Let us begin with the average ratios of widths. They are presented Table~\ref{tab:enc}.  When the ratio is a number less then 1000, it is displayed rounded to 2 digits. When it is greater, only the 
approximation $10^x$ is displayed.


	\begin{table}
		\centering
		\footnotesize
		\renewcommand\arraystretch{1.1} 
		\begin{tabular}{c|ccc|ccc }
			\hline
			\hline
			
			size & \texttt{GERSCHinv} &  \texttt{HADinv} & \texttt{CRAMinv} & \texttt{GERSCHinv} &  \texttt{HADinv} & \texttt{CRAMinv}  \\
			\hline
			5 & 8.01& $10^{4}$& 1.00 & 8.88& 41.91& 1.03\\ 
			10 & 19.90& $10^{3}$& 1.00 & 144.46& 16.65& 1.03\\
			15 & 34.96& $10^{3}$& 1.00 & $10^{6}$& 9.04& 1.04\\
			20 & 48.18& $10^{3}$& 1.00 & $10^{10}$& 5.97& 1.04\\
			25 & $10^{10}$& $10^{3}$& 1.00 & $10^{13}$& 4.35& 1.05\\
			30 & 203.06& 251.69& 1.00 & $10^{16}$& 3.71& 1.07\\
			35 & $10^{6}$& 188.74& 1.00 & $10^{19}$& 3.09& 1.06\\
			40 & $10^{14}$& 171.65& 1.00 & $10^{24}$& 2.74& 1.05\\
			45 & $10^{7}$& 128.90& 1.00  & $10^{25}$& 2.28& 1.06\\
			50 & $10^{16}$& 129.55& 1.00 & $10^{28}$& 2.20& 1.07\\

			\hline
			\hline		
		\end{tabular}
		
		\caption{Ratios of enclosures for matrices with fixed radii $10^{-5}$ and $10^{-3}$.}
		\label{tab:enc}
		
	\end{table}

Computation times are displayed in Table~\ref{tab:times}. For each size of matrix the average computation time is displayed; the numbers in brackets are standard deviations. To more clearly see the difference in computation time between the two most efficient methods \texttt{GEinv} and \texttt{CRAMinv} see Figure \ref{fig:besttimes}.

\begin{figure}[ht]
	\begin{center}
		\epsfig{file=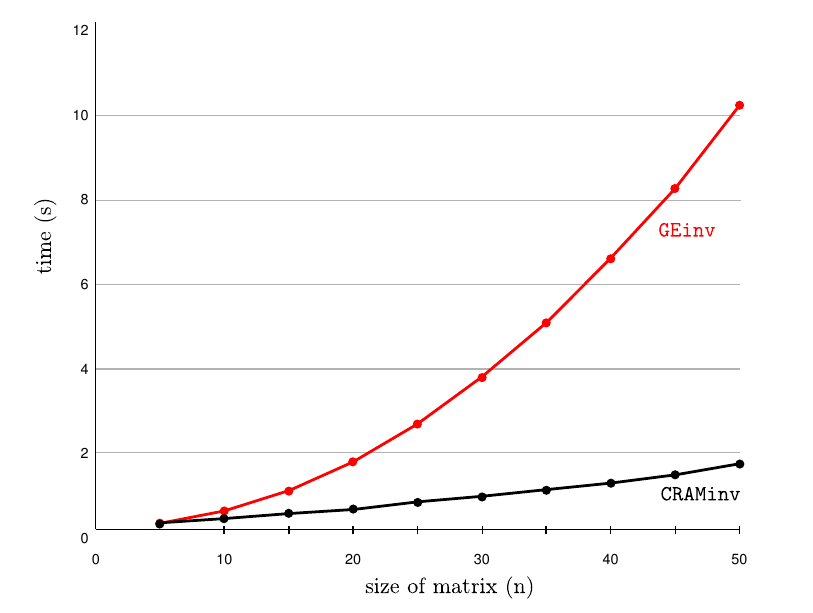,width=9cm,clip=}
		\caption{Comparison between average computation times (in seconds) of \texttt{GEinv} and \texttt{CRAMinv}.} 
	\end{center}
	\label{fig:besttimes}
\end{figure}

\begin{table}
	\centering
	\footnotesize
	\renewcommand\arraystretch{1.1} 
	\begin{tabular}{c|cccc|cccc}
		\hline
		\hline
		
size  & GEinv &  GERSCHinv &  HADinv & CRAMinv & GEinv &  GERSCHinv &  HADinv & CRAMinv\\
\hline
5 & 0.13 & 0.06 & 0.04 & 0.12 & 0.13 & 0.06 & 0.04 & 0.13 \\
& (0.00) & (0.00) & (0.00) & (0.00) & (0.00) & (0.00) & (0.00) & (0.02) \\
\hline
10 & 0.41 & 0.07 & 0.06 & 0.24 10 & 0.40 & 0.07 & 0.06 & 0.25 \\
& (0.00) & (0.00) & (0.00) & (0.00)  & (0.06) & (0.00) & (0.00) & (0.01) \\
\hline
15 & 0.90 & 0.09 & 0.08 & 0.36 15 & 0.91 & 0.09 & 0.08 & 0.39 \\
& (0.04) & (0.00) & (0.00) & (0.00)  & (0.01) & (0.00) & (0.00) & (0.03) \\
\hline
20 & 1.59 & 0.11 & 0.12 & 0.48 20 & 1.51 & 0.11 & 0.12 & 0.54 \\
& (0.01) & (0.00) & (0.00) & (0.01)  & (0.26) & (0.00) & (0.00) & (0.08) \\
\hline
25 & 2.48 & 0.13 & 0.16 & 0.62 25 & 2.41 & 0.13 & 0.16 & 0.73 \\
& (0.07) & (0.00) & (0.00) & (0.03)  & (0.29) & (0.00) & (0.00) & (0.12) \\
\hline
30 & 3.58 & 0.15 & 0.21 & 0.76 30 & 3.47 & 0.15 & 0.21 & 0.92 \\
& (0.02) & (0.00) & (0.00) & (0.01)  & (0.39) & (0.00) & (0.00) & (0.14) \\
\hline
35 & 4.88 & 0.17 & 0.27 & 0.93 35 & 4.59 & 0.17 & 0.27 & 1.09 \\
& (0.03) & (0.00) & (0.00) & (0.02)  & (0.80) & (0.00) & (0.00) & (0.23) \\
\hline
40 & 6.39 & 0.19 & 0.34 & 1.10 40 & 5.77 & 0.19 & 0.34 & 1.25 \\
& (0.03) & (0.00) & (0.00) & (0.04)  & (1.31) & (0.00) & (0.00) & (0.33) \\
\hline
45 & 8.05 & 0.22 & 0.42 & 1.29 45 & 7.34 & 0.22 & 0.42 & 1.48 \\
& (0.59) & (0.00) & (0.00) & (0.09)  & (1.54) & (0.00) & (0.00) & (0.40) \\
\hline
50 & 10.03 & 0.25 & 0.50 & 1.54 50 & 8.77 & 0.25 & 0.50 & 1.68 \\
& (0.04) & (0.00) & (0.00) & (0.06)  & (2.41) & (0.00) & (0.00) & (0.55) \\
\hline
	\hline		
	\end{tabular}
	
	\caption{Times of computation for radii $10^{-5}$ and $10^{-3}$. The plain number is average time (in seconds), the number inside brackets is the standard deviation.}
	\label{tab:times}
	
\end{table}


\subsection{Symmetric matrices}
We repeat the same test procedure with the best methods for interval symmetric matrices. Since these matrices have their eigenvalues real we can add the methods using real bounds on real eigenvalues. 
Symmetric matrices are generated in a similar way as before, only they are shaped to be symmetric. We compare the preconditioned methods with midpoint inverse \texttt{GEinv}, \texttt{GERSCHinv}, \texttt{HADinv} and \texttt{CRAMinv}.
We add one new method \texttt{EIG} based on computation of enclosures of eigenvalues using Theorem \ref{th:eig}. The method \texttt{GEinv} stays the reference method, i.e, we compare all methods with respect to this method.     

The enclosures widths for symmetric matrices are displayed in Table~\ref{tab:encsym}. We can see that as in the general case \texttt{CRAMERinv} does slightly worse than \texttt{GEinv}. Another thing we can see is that \texttt{EIG} is  worse than both  \texttt{CRAMERinv} and \texttt{GEinv}.

	\begin{table}
		\centering
		\footnotesize
		\renewcommand\arraystretch{1.1} 
		\begin{tabular}{c|cccc|cccc}
			\hline
			\hline
		size & \texttt{GERSCHinv} & \texttt{HADinv} &  \texttt{CRAMinv} & \texttt{EIG} & \texttt{GERSCHinv} & \texttt{HADinv} &  \texttt{CRAMinv} & \texttt{EIG}  \\
		\hline
		5 & 7.68& $10^{4}$& 1.00& 2.08 & 7.77& 50.29& 1.01& 2.02\\
		10 & 18.38& $10^{3}$& 1.00& 2.56  & 61.98& 19.22& 1.01& 2.47\\
		15 & 28.38& $10^{3}$& 1.00& 2.99 & $10^{6}$& 11.43& 1.04& 2.73\\
		20 & 44.43& $10^{3}$& 1.00& 3.10 & $10^{7}$& 7.67& 1.03& 2.90\\
		25 & $10^{9}$& $10^{3}$& 1.00& 3.18 & $10^{11}$& 5.70& 1.03& 3.02\\
		30 & 80.43& $10^{3}$& 1.00& 3.33 & $10^{16}$& 4.53& 1.05& 3.10\\
		35 & $10^{5}$& 301.69& 1.00& 3.52 & $10^{18}$& 3.96& 1.04& 3.46\\
		40 & $10^{5}$& 219.13& 1.00& 3.38 & $10^{22}$& 3.41& 1.04& 3.70\\
		45 & $10^{5}$& 183.44& 1.00& 3.48 & $10^{25}$& 2.73& 1.05& 3.65\\
		50 & $10^{3}$& 162.34& 1.00& 3.62  & $10^{26}$& 2.70& 1.04& 4.32\\
			\hline
			\hline		
		\end{tabular}
	
		\caption{Ratios of enclosures for symmetric matrices with radii $10^{-5}$ and $10^{-3}$.}
		\label{tab:encsym}
		
	\end{table}

The computation times are displayed in Table \ref{tab:times5sym}. We can see that \texttt{EIG} shows low computational demands compared to the other methods. One can argue that we can use 
filtering methods to get even tighter enclosures of eigenvalues. However, they work well in specific cases \cite{hladik2011filtering} and the filtering is much more time consuming.

\begin{table}
	\centering
	\footnotesize
	\renewcommand\arraystretch{1.1} 
	\begin{tabular}{cccccc}
		\hline
		\hline
		
		size & \texttt{GEinv} & \texttt{GERSCHinv} & \texttt{HADinv} &  \texttt{CRAMinv} & \texttt{EIG} \\
		\hline
		5 & 0.13 & 0.06 & 0.04 & 0.12 & 0.01\\
		& (0.00) & (0.00) & (0.00) & (0.00) & (0.00)  \\
		\hline
		10 & 0.41 & 0.07 & 0.06 & 0.24 & 0.02\\
		& (0.00) & (0.00) & (0.00) & (0.00) & (0.00)  \\
		\hline
		15 & 0.90 & 0.09 & 0.08 & 0.36 & 0.02\\
		& (0.00) & (0.00) & (0.00) & (0.00) & (0.00)  \\
		\hline
		20 & 1.59 & 0.11 & 0.12 & 0.48 & 0.03\\
		& (0.01) & (0.00) & (0.00) & (0.01) & (0.00)  \\
		\hline
		25 & 2.47 & 0.13 & 0.16 & 0.63 & 0.03\\
		& (0.01) & (0.00) & (0.00) & (0.04) & (0.00)  \\
		\hline
		30 & 3.56 & 0.15 & 0.21 & 0.76 & 0.04\\
		& (0.02) & (0.00) & (0.00) & (0.01) & (0.00)  \\
		\hline
		35 & 4.88 & 0.17 & 0.27 & 0.93 & 0.05\\
		& (0.02) & (0.00) & (0.00) & (0.02) & (0.00)  \\
		\hline
		40 & 6.36 & 0.19 & 0.34 & 1.10 & 0.07\\
		& (0.04) & (0.00) & (0.00) & (0.02) & (0.00)  \\
		\hline
		45 & 8.09 & 0.22 & 0.42 & 1.30 & 0.08\\
		& (0.04) & (0.00) & (0.00) & (0.02) & (0.00)  \\
		\hline
		50 & 9.96 & 0.25 & 0.50 & 1.53 & 0.10\\
		& (0.06) & (0.00) & (0.00) & (0.03) & (0.00)  \\
		\hline	
	\end{tabular}
	
			\caption{Times of computation (in seconds)  for symmetric matrices with radii $10^{-5}$.}
			\label{tab:times5sym}
	
\end{table}


\section{Conclusion}
In the paper we showed that, unfortunately, even approximation of exact bounds of an interval determinant is NP-hard problem (for both relative and absolute approximation). 
On the other hand, we proved that there are some special types of matrices where interval determinant can be computed in polynomial time -- symmetric positive definite, certain matrices with $A_c = I$ or tridiagonal H-matrices.
We discussed four methods \texttt{GE} -- the "state-of-the-art" Gaussian elimination, \texttt{GERSCH} -- our generalized Gerschgorin circles for interval matrices, \texttt{HAD} -- our generalized Hadamard inequality for interval matrices and \texttt{CRAM} -- our designed method based on  Cramer's rule. We introduced a method that can possibly improve an enclosure based on monotonicity checking. All methods combined with preconditioning were tested on random matrices of various sizes.
For interval matrices with radii less than $10^{-3}$ the methods \texttt{GEinv} and \texttt{CRAMinv} return similar results. The larger the intervals the slightly worse \texttt{CRAMinv} becomes. However, its computation time is much more convenient (it is possible to compute a determinant of an interval matrix of order $50$ by \texttt{CRAMinv} at the same cost as an interval matrix of order $20$ by \texttt{GEinv}).  Matrices of order larger than $5$ need some form of preconditioning otherwise \texttt{GE} and \texttt{CRAM} return infinite intervals. In our test cases the \texttt{lu} preconditioning did not prove to be suitable. The methods \texttt{HAD} and \texttt{GERSCH} always return finite intervals, but these intervals can be huge. Both methods work better with the \texttt{inv} preconditioning. The \texttt{HADinv} returns much tighter intervals than \texttt{GERSCH}, however, it can not distinguish the sign of determinant since the enclosure is symmetric around $0$.

The analysed properties of the methods do not change dramatically when dealing with symmetric matrices. The newly added method \texttt{EIG} showed constant and not so huge overestimation and much smaller computation times. The possible improvement of \texttt{EIG} enclosures for symmetric matrices (by applying suitable forms of filtering and eigenvalue enclosures) might be matter of further research. 
There are many more options for future research -- studying various matrix decompositions and preconditioners or studying other special classes of matrices.

\newpage 
\bibliography{literatura}
\bibliographystyle{plain}
\end{document}